\documentclass[12pt,reqno]{amsart}
\usepackage{amsfonts}
\usepackage{dsfont}
\usepackage{amsfonts}
\usepackage{bbm}
\usepackage{mathrsfs}
\usepackage{color}
\usepackage{caption}
\usepackage{mathrsfs}
\usepackage{amsmath}
\usepackage{enumitem}
\usepackage{diagbox}
\allowdisplaybreaks[4]
\usepackage{amsfonts}
\usepackage{amsthm}
\usepackage{amssymb, amsmath,cite}
\usepackage{bm}
\usepackage{amsthm}
\usepackage{verbatim}
\usepackage{graphicx}
\usepackage{color}
\usepackage{amsfonts}
\usepackage{dsfont}
\usepackage{amsfonts}
\usepackage{bbm}
\usepackage{mathrsfs}
\usepackage{color}
\usepackage{graphicx,amsmath,amsfonts,amssymb,amscd,amsthm}
\usepackage{hyperref}
\usepackage[numbers,sort&compress]{natbib}
\usepackage{enumitem}
\usepackage{xstring}   
\usepackage{etoolbox}
\setenumerate[1]{itemsep=1pt,partopsep=0pt,parsep=\parskip,topsep=1pt}
\setitemize[1]{itemsep=1pt,partopsep=0pt,parsep=\parskip,topsep=1pt}

\newtheorem{theorem}{Theorem}[section]
\newtheorem{lemma}{Lemma}[section]

\newtheorem{remark}{Remark}[section]
\newtheorem{definition}{Definition}[section]

\usepackage{graphicx,amsmath,amsfonts,amssymb,amscd,amsthm}
\usepackage{hyperref}
\usepackage{multirow} 
\usepackage[numbers,sort&compress]{natbib}
\usepackage{array}
\usepackage{booktabs}
\usepackage{multirow}
\usepackage{makecell}
\usepackage{abstract}
\usepackage{graphicx}
\usepackage{float}
\usepackage{pifont}
\hypersetup{colorlinks,linkcolor=blue,citecolor=blue}
\newtheoremstyle{kai}
{3pt} {3pt} {} {} {\bfseries} {.} {.5em} {}
\def\EquationsBySection{\def\theequation
{\thesection.\arabic{equation}}
\@addtoreset{equation}{section}}

\newcommand\old[1]{}

 \newcommand{\Bp}{\begin{proof}}
 \newcommand{\Ep}{\end{proof}}
\renewcommand{\theequation}{\thesection.\arabic{equation}}

 \newcommand{\beq}{\begin{equation}}
\newcommand{\eeq}{\end{equation}}
\newfloat{figtab}{htb}{fgtb}
\makeatletter
  \newcommand\figcaption{\def\@captype{figure}\caption}
  \newcommand\tabcaption{\def\@captype{table}\caption}
\makeatother
\DeclareMathOperator{\dive}{div}
\numberwithin{equation}{section}
\addcontentsline{toc}{section}{Acknowledgement}
\begin{document}
\author{Xin Xu}
\address{Xin Xu, School of Mathematical Sciences, South China Normal University, Guangdong 510631, China.}
\email{xuxin1994pkq@163.com}

\author{Kexin Zhang}
\address{Kexin Zhang, Chern Institute of Mathematics, Nankai University, Tianjin 300071, China.}
\email{kxzmath@163.com}


\title
{Asymptotics of the principal eigenvalue of an elliptic operator on closed and orientable  Riemannian manifolds}


\date{}
\maketitle

\begin{abstract}
This paper investigates the asymptotic behavior of the principal eigenvalue $\lambda(s)$, as $s\to+\infty$, for the following elliptic eigenvalue problem
\begin{equation*}\label{E}
  -\Delta_{M}u-s\langle  \nabla_M f, \nabla_M u\rangle_g +c u=\lambda(s)u,
\end{equation*}
defined on an orientable and closed  Riemannian manifold $(M,g)$. Assuming $f$ is a Morse function defined on $M$, we find that the limit $\lim\limits_{s\to+\infty} \lambda(s)$ is determined by the minimum value of the function $c$ over the set of the maximum points of $f$, a result that is independent of the  curvature of manifold.
\end{abstract}

\noindent \textbf{Keywords:} Principal eigenvalue, asymptotic behavior,  Riemannian manifold.
\vskip20pt
\section{Introduction}
In this paper, we are concerned with the  limit of the principal eigenvalue $\lambda(s)$, as $s\to+\infty$, corresponding to the following elliptic eigenvalue problem
\begin{equation}\label{E}
  -\Delta_{M}u-s\langle \nabla_M f,\nabla_M u\rangle_g +c u=\lambda(s)u,
\end{equation}
on an orientable and closed, i.e., compact and without boundary, Riemannian manifold $(M,g)$ of dimension $n$, associated with a Riemannian metric $g$.  Here, $\nabla_M$ is the gradient and $\Delta_M $ is the Laplacian on the manifold $M$. We suppose that the parameter $s$ is positive and the functions $f$ and $c$ are smooth on $M$. In fact, the drifted Laplacian
$\Delta_f=\Delta_M+\langle\nabla_M f,\nabla_M\cdot\rangle_g$
 has many favorable properties. See the appendix for details.

 It is widely recognized that the eigenvalue problem \eqref{E} has an eigenvalue $\lambda(s)$ whose real part is smaller than those of all other eigenvalues,
see \cite{RGA}. This eigenvalue is commonly called the \textit{principal eigenvalue}, and it is both real and simple. Furthermore, this eigenvalue corresponds to an eigenfunction, known as the \textit{principal eigenfunction}, which can be chosen to be positive on $M$. 

To first ground our geometric intuition in a familiar setting, we begin by considering the eigenvalue problem \eqref{E} on a bounded domain $\Omega$ in Euclidean space \(\mathbb{R}^n\) equipped with the flat metric.
Meanwhile, $\Delta_M$ and $\nabla_M$ in \eqref{E} become the familiar Laplacian $\Delta $ and gradient $\nabla $. Since the metric is flat, the inner-product term with respect to $g$ in the equation \eqref{E}  can be written as the standard inner product in Euclidean space.
 The corresponding linear elliptic operator eigenvalue problem:
 \begin{equation}\label{V}
  \left\{
\begin{array}{l}
  -D\Delta u(x)-s\mathbf{v} \cdot\nabla u(x)+V(x)u(x)=\lambda(s)u(x),\quad x\in \Omega,\\
 \mathcal{ B} u=0,\quad x\in\partial\Omega
  \end{array}\right.
\end{equation}
has been extensively studied. Here, $D$ denotes the diffusion rate, $\mathbf{v}(x)$ is a vector field in $\Omega$, $V(x)$ is a continuous function in $\Omega$, and
$\mathcal{ B} u=0$ represents the boundary condition.

When \(\mathcal{ B} u=0\) denotes the \textit{Dirichlet boundary condition}, the asymptotic behavior of the principal eigenvalue $\lambda(s)$ as $D \to 0$ was extensively studied in \cite{DEF73, FA7273, WENTZEL, W75} under various conditions on $\mathbf{v}$. Following these, the works \cite{DF78, FS97} analyzed the asymptotic properties of the principal eigenfunctions.
When \(\mathcal{ B}u =0 \) denotes the \textit{Neumann boundary condition}, in the case that $\dive \mathbf{v}=0$, Berestycki et. al. in \cite{Bcmp} proved that $\lambda(s)$ converges to the minimum value of a specific functional as $s\to+\infty$.
In the case that $\mathbf{v}=\nabla f$ for some $f$ which is non-degenerate,
Chen and Lou in Theorem 1.1 of \cite{CL} established that
$$
\lim_{s\to+\infty}\lambda(s)=\min_{x\in\mathcal{M}}V(x),
$$
where $\mathcal{M}$ is the set of points of local maximum of $f$ on $\Omega$.
In the case that $\mathbf{v}$ is a  general advection term in dimension two, under some necessary restrictions,
the asymptotic behavior of the principal eigenvalue $\lambda(s)$,  as  \( s \) tends to infinity, has been well studied, see \cite{LLZ}.
Additional studies on the influence of advection terms on the principal eigenvalues of second-order elliptic or parabolic operators   are available in the references, such as \cite{HNR2011,LLPZd,LLPZx,PZhao,PZ,N09,N10}.

Our motivation for studying the eigenvalue problem \eqref{E} on manifolds stems from its natural emergence in several geophysical and biological contexts.

In geophysical fluid dynamics, the troposphere ($\approx 10\,\mathrm{km}$) on Earth is four orders of magnitude thinner than the Earth's radius ($\approx 6\,400\,\mathrm{km}$), so large-scale winds are naturally described by functions on the sphere $S^{2}$ \cite{AAA,AM}.Such spherical geometry is also a canonical setting for investigating reaction-diffusion processes and complex pattern formations, where the global constraints of the manifold play a decisive role \cite{Varea1999}.

 Similarly, in mathematical biology, many species inhabit domains that are intrinsically curved and closed. For instance, migratory birds, marine plankton, or the global spread of pathogens occupy habitats that cover the entire Earth's surface; these individuals cannot leave this closed curved space.
 In these biological contexts, the curvature of the underlying surface is known to exert a fundamental influence on the spectral properties of diffusion operators, which in turn dictates the spatial distribution and persistence of species \cite{Plaza2004}.

 In both examples, the natural domain is a closed, orientable Riemannian manifold, prompting our inquiry into the asymptotic behavior of the principal eigenvalue
$\lambda(s)$ in this geometric setting.


In this paper, we study the asymptotic behavior of the principal $\lambda(s)$ eigenvalue as $s\to+\infty$ on the closed and orientable Riemannian manifolds and our main result can be read as follows.
\begin{theorem}\label{mmm}
Asumme that $M$ is a closed and orientable Riemannian manifold, and the function $f: M\rightarrow \mathbb{R}$ is a Morse function. Then, for the principal eigenvalue $\lambda(s)$ of the eigenvalue problem \eqref{E}, we have
$$
\lim_{s\to+\infty}\lambda(s)= \min_{p\in\mathcal{M}}c(p),
$$
where $\mathcal{M}$ represents the maximum points set of the function $f$ on $M$.
\end{theorem}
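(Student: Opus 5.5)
We follow the strategy of Chen and Lou \cite{CL}, adapted to the closed manifold. The key structural fact is that the operator in \eqref{E} is self-adjoint in a weighted space. Since $\operatorname{div}_M(e^{sf}\nabla_M u)=e^{sf}(\Delta_M u+s\langle\nabla_M f,\nabla_M u\rangle_g)$, equation \eqref{E} becomes
\[
-\operatorname{div}_M(e^{sf}\nabla_M u)+c\,e^{sf}u=\lambda(s)e^{sf}u .
\]
Since $M$ is closed and orientable, we may integrate by parts against the volume form $dV_g$ with no boundary terms. This gives the variational characterization
\[
\lambda(s)=\min_{0\ne u\in H^1(M)}\frac{\int_M\big(|\nabla_M u|^2+c u^2\big)e^{sf}\,dV_g}{\int_M u^2e^{sf}\,dV_g}.
\]
The minimizer is the positive principal eigenfunction, and simplicity of $\lambda(s)$ identifies it with the principal eigenvalue. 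Substituting $w=e^{sf/2}u$ gives the equivalent Schrödinger form
\[
\lambda(s)=\min_{w}\frac{\int_M\big(|\nabla_M w|^2+V_s w^2\big)dV_g}{\int_M w^2\,dV_g},\qquad V_s=\tfrac{s^2}{4}|\nabla_M f|^2+\tfrac{s}{2}\Delta_M f+c.
\]

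\textbf{Upper bound.} Let $p\in\mathcal M$ attain $\min_{\mathcal M}c$. Take the test function $u\equiv1$ restricted near $p$; more precisely, use $u=\chi$, a cutoff equal to $1$ on a geodesic ball $B_\delta(p)$ and supported in $B_{2\delta}(p)$. The maximum of $e^{sf}$ is attained on $\mathcal M$, and $f(p)=\max f$. Hence the weight $e^{s(f-f(p))}$ concentrates near the global maximum set, which consists of finitely many nondegenerate points by the Morse condition. To make this clean, let $u$ be a smooth function equal to $1$ near each point of $\mathcal M$ where $c$ is close to the minimum and $0$ near the rest of $\mathcal M$. The gradient term is supported where $f\le \max f-\eta$, so it is $O(e^{-s\eta})$ relative to the denominator. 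The denominator is of order $s^{-n/2}e^{s\max f}$ by Laplace's method, using nondegeneracy of the Hessian in normal coordinates. The quotient therefore tends to at most $\min_{\mathcal M}c+o_\delta(1)$, which gives $\limsup\lambda(s)\le\min_{\mathcal M}c$.

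If $\mathcal M$ is meant as the set of local maxima, as in \cite{CL}, we use instead the Schrödinger form with the test function $w=\chi\, e^{s(f-f(p))/2}$ localized at a single local maximum $p$. Then $V_s w^2$ and $|\nabla w|^2$ combine via integration by parts into $\int_M(|\nabla\chi|^2+c\chi^2)e^{s(f-f(p))}$, and the same Laplace argument near $p$ applies. In both readings the upper bound $\limsup_{s\to\infty}\lambda(s)\le\min_{\mathcal M}c$ follows.

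\textbf{Lower bound.} This is the main obstacle. We use the Schrödinger form with a partition of unity (IMS localization). Fix $R>0$ and take $\sum_j\phi_j^2=1$, with $\phi_j$ supported in geodesic balls of radius $R s^{-1/2}$ around each critical point of $f$, and $\phi_0$ supported away from them. The IMS formula then gives
\[
Q(w)=\sum_j Q(\phi_jw)-\sum_j\int_M|\nabla\phi_j|^2w^2,
\]
and the error term is $O(s/R^2)$. This scale is too large, so we localize at scale $s^{-1/2+\epsilon}$ instead, making the error $O(s^{1-2\epsilon})$ while each harmonic well is treated sharply.

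On the region away from critical points, $|\nabla f|\ge\kappa>0$ forces $V_s\ge \kappa^2 s^2/8$. Near a critical point $q$, we use Morse coordinates in which $g=I+O(|x|^2)$ and $f=f(q)+\tfrac12\sum\mu_ix_i^2+O(|x|^3)$. Then $V_s=\tfrac{s^2}{4}\sum\mu_i^2x_i^2+\tfrac s2\sum\mu_i+c(q)+$ errors. The lowest eigenvalue of the model harmonic oscillator is $\tfrac s2\sum|\mu_i|$, so the localized bound is $\tfrac s2\sum(|\mu_i|+\mu_i)+c(q)-o(1)$. This is $\ge c(q)-o(1)$, with growth of order $s$ unless every $\mu_i<0$, i.e. unless $q$ is a local maximum.

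The delicate part is the error bookkeeping. The cubic terms of $f$, the metric corrections, and the IMS error $O(s^{1-2\epsilon})$ must all be $o(1)$ relative to the gap, except at maxima, where we need them to be $o(1)$ absolutely. This requires care. As an alternative, we would follow Chen–Lou's more robust route: argue by contradiction with the normalized eigenfunction $w_s$, show its mass concentrates at local maxima via the estimates above, and pass to the limit.

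Combining the upper and lower bounds yields $\lim_{s\to+\infty}\lambda(s)=\min_{\mathcal M}c$. Curvature enters only through the $O(|x|^2)$ metric corrections, which vanish in the limit.
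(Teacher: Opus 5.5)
Your upper bound is essentially the paper's: a cutoff equal to $1$ near a local maximum $p$, whose gradient lives where $f\le f(p)-\eta$ and is therefore exponentially negligible against $\int\chi^2e^{sf}\,dV_g$. The gap is in the lower bound.

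\textbf{Why your main route fails.} You expand $\int_M e^{sf}|\nabla_M v|^2$ into the potential $V_s$. You then need, at each local maximum $q$, a lower bound for $-\Delta_M+V_s$ that is accurate to $o(1)$. There the order-$s$ term $\frac s2\Delta_M f(q)=\frac s2\sum\mu_i$ must cancel the zero-point energy $\frac s2\sum|\mu_i|$ exactly. Your own IMS error is $O(s^{1-2\epsilon})$. Moreover, a ball of radius $\ell\gtrsim s^{-1/2}$ is needed to resolve the well, and on such a ball the Taylor corrections are of size $s\ell\gtrsim s^{1/2}$ when bounded in absolute value (for example, the variation of $\frac s2\Delta_M f$). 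So the bookkeeping you describe cannot close at the maxima, and ``this requires care'' hides a step that fails. Your fallback sentence has the right shape, but in the $V_s$ formulation ``pass to the limit'' is unjustified. Since $V_s(q)=-\frac s2\sum|\mu_i|+c(q)$, concentration of $w_s^2$ at maxima does not by itself give $\liminf\lambda(s)\ge\min_{\mathcal M}c$.

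\textbf{The missing idea: do not integrate the cross term by parts.} For the eigenfunction, with $w=e^{sf/2}v$ and $\int_M w^2\,dV_g=1$,
\[
\lambda(s)=\int_M\Big|\nabla_M w-\tfrac s2\,w\,\nabla_M f\Big|_g^2\,dV_g+\int_M c\,w^2\,dV_g\ \ge\ \int_M c\,w^2\,dV_g .
\]
This is the same identity you used in your second upper-bound variant. With it, nothing has to be computed at the maxima. If $w_{s_m}^2\,dV_g\rightharpoonup\mu$, then $\liminf\lambda(s)\ge\int_M c\,d\mu$. It only remains to show that $\mu$ charges no point other than a local maximum.

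The paper does this from the energy bound $\int_M e^{sf}|\nabla_M v|^2\,dV_g\le c^*-c_*$, in two cases:
\begin{itemize}
\item Near non-critical points, the term $\frac{s^2}{4}|\nabla_M f|^2w^2$ dominates after the $O(s)$ cross term is absorbed by Young's inequality.
\item Near a saddle or minimum, it integrates by parts in a single Morse direction $x^l$ with $\partial_{ll}\hat f=1$. This produces $\frac s2\int\eta_R^2w^2$ with a good sign and gives local mass $O(1/s)$.
\end{itemize}
At saddles and minima your harmonic comparison would also work, since the gap there is of order $s$ and only relative errors matter, but it is heavier.

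Finally, this argument concentrates $\mu$ on local maxima, so $\mathcal M$ must be read as the set of local maxima. Your ``global maxima'' reading of the upper bound would not match the lower bound.
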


The precise definition and the properties of Morse function on a manifold can be found in the next section, see Definition \ref{mf} and Lemma \ref{morsel}.

\begin{remark}
Compared to problem \eqref{V} on a bounded Euclidean domain, problem \eqref{E} is posed on a closed manifold and hence requires no boundary conditions.
This result demonstrates that the asymptotic behavior of the principal eigenvalue $\lambda(s)$ on closed orientable Riemannian manifolds exhibit a structure analogous to that of Theorem 1.1 in Chen and Lou \cite{CL}. We show that, under large drift, the limit of $\lambda(s)$ on bounded domains $\Omega\in \mathbb{R}^n$ extends naturally to general closed orientable Riemannian manifolds $(M,g)$, provided that $f$ is a Morse function.


\end{remark}

\begin{remark}
An interesting observation is that, although the problem is set on a manifold, the limiting value $\min_{p\in\mathcal{M}}c(p)$ contains no explicit curvature terms. There are two reasons for this.
First, for a fixed Morse function $f$, in the large drift regime $s\to\infty$, the limit depends only on the local behavior of $f$ near its maximum points.
Second, while the  geometry and topology of manifolds restrict which Morse functions \(f\) are admissible, once \(f\) is fixed, the limit itself does not involve curvature. In other words, the geometry affects only the admissibility of \(f\), not the explicit form of the limit.
\end{remark}

The rest of this paper is organized as follows. Section \ref{pre} provides some preliminaries, including some basic definitions of functions  (Morse function) on the manifold $M$, and  the classification of the critical points of the Morse functions. In Section \ref{pf}, we give the proof of Theorem \ref{mmm} by estimating the upper and lower bound of the principal eigenvalue $\lambda(s)$. Section \ref{app} is the appendix which provides the variational characterization of the principal eigenvalue $\lambda(s)$ of \eqref{E}.

\section{Preliminaries}\label{pre}

Let $(M,g)$ be a closed, orientable Riemannian manifold.
For a local chart $(U,\varphi)$ we denote
\[
\varphi(p)=\bigl(x^{1}(p),\dots,x^{n}(p)\bigr)\in\mathbb{R}^{n},\quad p\in U,
\]
by the coordinate functions $x^{i}:U\to\mathbb{R}$.
The metric $g$ is then represented by a smooth, symmetric, positive-definite matrix $(g_{ij}(x))_{n\times n}$, with $x=(x^{1},\dots,x^{n})=\varphi(p)\in \varphi(U)$.

\begin{definition}[See \cite{MT}]\label{cp}
 Assume that $f: M \rightarrow \mathbb{R}$ is smooth. A critical point  of $f$ is a point $p\in M$, satisfying the differential $df_p = 0$. The critical point $p$, is called non-degenerate, if the Hessian matrix \( H_f(p) \) is non-singular.
\end{definition}

According to Definition \ref{cp}, we have $df_p = 0$, which implies that
\[
\left.\frac{\partial}{\partial x^{i}}\right\vert_{p} f
=\left.\frac{\partial}{\partial x^{i}}\right\vert_{\varphi(p)}(f\circ\varphi^{-1})
=\frac{\partial \hat{f}}{\partial x^{i}}(\varphi(p))=0,\quad \text{for all }i=1,\dots,n,
\]
in the local coordinate $(U,\varphi)$ by \cite{jmlee}, where $\hat{f}=f\circ\varphi^{-1}$.
In what follows, we adopt the simplified notation $\frac{\partial \hat{f}}{\partial x^{i}}(\varphi(p))$ to denote $\left.\frac{\partial}{\partial x^{i}}\right\vert_{p} f$. So we obtain the local coordinate representation of $|\nabla_M f_p|^2$:
\[
|\nabla_M f_p|^2=g^{ij}\frac{\partial \hat{f}}{\partial x^{i}}\frac{\partial \hat{f}}{\partial x^{j}}(\varphi(p)), \quad \text{(summation over }i,j \text{ by Einstein convention),}
\]
where $g^{ij}$ is the $(i,j)$-component of the inverse matrix of the Riemannian metric $(g_{ij}(x))$. So $df_p = 0$ implies that $|\nabla_M f_p|=0$. Moreover, if the critical point $p$ is non-degenerate, we have
\[
\det\left(\frac{\partial^{2} \hat{f}}{\partial x^{i}\partial x^{j}}(\varphi(p))\right)\neq 0.
\]

Since in the local coordinate $(U,\varphi)$, the Riemannian metric $(g_{ij}(x))$, where $x=\varphi(p)$, is represented by a real symmetric, smooth and positive definite matrix and so is the inverse metric $(g^{ij}(x))$, then on the compact set $\overline{\varphi(U)}$, for any vector $\xi\in\mathbb{R}^{n}$, there exist two positive constants $e_1<e_2$ independent of $x$, such that
\begin{equation}\label{g12}
e_1|\xi|^2\le g^{ij}(x)\xi_i\xi_j\le e_2|\xi|^2,\quad x\in\overline{\varphi(U)}.
\end{equation}
By the property of the Riemannian metric, we can also obtain that there exist two positive constants $e_3,e_4$ independent of $x$, such that for $x\in\varphi(U)$,
\begin{equation}\label{g34}
e_3\le\sqrt{\det g_{ij}(x)}\le e_4.
\end{equation}

On smooth manifolds, Morse functions, those with exclusively non-degenerate critical points, are characterized by the following fundamental definition and lemma.
\begin{definition}[Morse Function \cite{MT}]\label{mf}
A smooth function \( f: M \rightarrow \mathbb{R} \) is called a  Morse function if every critical point of \( f \) is non-degenerate.
\end{definition}
\begin{lemma}[Morse's Lemma \cite{MT}]\label{morsel}
Let \( M \) be a smooth manifold and \( f: M \to \mathbb{R} \) be a Morse function. Suppose that \( p \in M \) is a non-degenerate critical point of \( f \),  then there exist a local coordinates \((x^1,...,x^n) \) around \( p \), such that in these coordinates, the function \( f \) can be written as:
\[
f(x)=f(p)-\frac{1}{2}\sum_{i=1}^{k} (x^i)^2+\frac{1}{2}\sum_{i=k+1}^{n} (x^i)^2,
\]
where $k=0,1,...,n$.
\end{lemma}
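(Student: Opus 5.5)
The plan is the classical proof of Morse's Lemma: reduce to a chart, write $f$ as a quadratic form with smooth coefficients, and diagonalize that form one coordinate at a time, using the inverse function theorem at each step to keep a valid coordinate system.

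\emph{Step 1: reduction to a chart.} Choose a chart $(U,\varphi)$ with $\varphi(p)=0$ and $\varphi(U)$ a ball. Put $\hat f=f\circ\varphi^{-1}-f(p)$, so that $\hat f(0)=0$ and $\nabla\hat f(0)=0$. Since $p$ is non-degenerate, $\det\bigl(\partial_i\partial_j\hat f(0)\bigr)\neq 0$, as recorded after Definition \ref{cp}.

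\emph{Step 2: Hadamard's lemma, applied twice.} Write
\[
\hat f(x)=\int_0^1\frac{d}{dt}\hat f(tx)\,dt=\sum_i x^i g_i(x),\qquad g_i(0)=\partial_i\hat f(0)=0.
\]
Apply the same identity to each $g_i$ and symmetrize. This gives
\[
\hat f(x)=\sum_{i,j}x^ix^jh_{ij}(x),
\]
where the $h_{ij}=h_{ji}$ are smooth and $h_{ij}(0)=\tfrac12\partial_i\partial_j\hat f(0)$. In particular the matrix $(h_{ij}(0))$ is non-singular.

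\emph{Step 3: inductive diagonalization.} The inductive hypothesis for $r\ge1$ is that, near $0$, there are coordinates $u^1,\dots,u^n$ with
\[
\hat f=\pm (u^1)^2\pm\cdots\pm(u^{r-1})^2+\sum_{i,j\ge r}u^iu^jH_{ij}(u),
\]
where $(H_{ij})$ is symmetric, smooth, and $(H_{ij}(0))_{i,j\ge r}$ is non-singular.

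The inductive step has three parts.
\begin{itemize}
\item First make $H_{rr}(0)\neq0$ by a linear change of the variables $u^r,\dots,u^n$. This is possible because a non-singular symmetric matrix has some nonzero diagonal entry after a rotation.
\item By continuity $H_{rr}\neq0$ on a neighbourhood of $0$. There define
\[
v^r=\sqrt{|H_{rr}(u)|}\Bigl(u^r+\sum_{i>r}u^i\frac{H_{ir}(u)}{H_{rr}(u)}\Bigr),\qquad v^i=u^i \text{ for } i\neq r.
\]
\item The Jacobian of $u\mapsto v$ at $0$ is invertible, so by the inverse function theorem $v$ is a coordinate system near $0$.
\end{itemize}
Completing the square then gives
\[
\hat f=\sum_{i<r}\pm(v^i)^2\pm(v^r)^2+\sum_{i,j>r}v^iv^j\tilde H_{ij}(v),
\]
with $(\tilde H_{ij}(0))_{i,j>r}$ still non-singular, because the full quadratic form is non-degenerate. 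After $n$ steps, $\hat f$ is a sum of $\pm$ squares.

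\emph{Step 4: normalization.} Reorder the coordinates so that the negative squares come first. Rescale every coordinate by $\sqrt2$ to produce the factor $\tfrac12$. The resulting expression is
\[
f=f(p)-\tfrac12\sum_{i\le k}(x^i)^2+\tfrac12\sum_{i>k}(x^i)^2,
\]
which is the form stated in Lemma \ref{morsel}. By Sylvester's law of inertia, $k$ equals the number of negative eigenvalues of the Hessian at $p$, so $k$ is independent of the choices made.

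\emph{Main obstacle.} The delicate point is Step 3. One must check that the completed-square substitution is a genuine local diffeomorphism, and that the remaining block $(\tilde H_{ij}(0))_{i,j>r}$ stays non-singular so that the induction can continue. I would settle both facts by evaluating the Jacobian at $0$, where it is block triangular with nonzero diagonal entries. I would then note that the Hessian at $0$ in the new coordinates is congruent to the original non-singular Hessian, so its lower block must also be non-singular.
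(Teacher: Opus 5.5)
Your proposal is correct. It is the classical proof of Morse's Lemma: Hadamard's lemma applied twice, then inductive completion of squares, with the inverse function theorem keeping each substitution a valid coordinate change. The paper gives no proof of its own and only cites \cite{MT}, and this is the argument found there. Your two additions are also sound: you carry non-singularity of the lower block $(H_{ij}(0))_{i,j\ge r}$ through the induction using congruence of Hessians at a critical point, and the final $\sqrt{2}$ rescaling produces the paper's $\tfrac12$ normalization.
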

The critical point $p$ is classified as:
\begin{itemize}
\item A maximal point when $k=n$:
\[
f(x)=f(p)-\frac{1}{2}\sum_{i=1}^{n} (x^i)^2.
\]
\item A minimal point when $k=0$:
\[
f(x)=f(p)+\frac{1}{2}\sum_{i=1}^{n} (x^i)^2.
\]
\item A saddle point when $1\leq k\leq n-1$:
\[
f(x)=f(p)-\frac{1}{2}\sum_{i=1}^{k} (x^i)^2+\frac{1}{2}\sum_{i=k+1}^{n} (x^i)^2.
\]
\end{itemize}

\textbf{Variational.} The principal eigenvalue $\lambda(s)$ is characterized by
\begin{eqnarray}\label{voe}
\lambda(s)
=\min_{u\in H^1(M,\mu)\atop u\not\equiv 0}\frac{\int_{M} e^{sf }\left(|\nabla_M u|^2+c |u|^2\right)\mathrm{dV_g} }{\int_{M}  e^{ sf}u^2\mathrm{dV_g}  },
\end{eqnarray}
where $\mathrm{dV_g}$ is the volume element induced by the metric and the derivation of \eqref{voe} is in the appendix.

We denote by the positive function $v(s,\cdot)$ the positive principal eigenfunction corresponding to the principal eigenvalue $\lambda(s)$ of problem \eqref{E}, satisfying
$$
\int_{M}  e^{ sf}v^2(s,\cdot)\mathrm{dV_g} =1.
$$
Let $w=e^{\frac{sf}{2} }v$. Then
$$\int_M w^2(s,\cdot) \mathrm{dV_g}=1,
$$
so that $w^2(s,\cdot) \mathrm{d}V_g$ defines a family of probability measures, Since the manifold $M$ is compact, this family is tight. According to the Prokhorov's theorem \cite{yan}, there exists a subsequence $\{s_m\}_{m=1}^{+\infty}$ with $s_m\to\infty$ and a probability measure $\mu$ such that
\begin{equation}\label{weak}
\lim_{m\to+\infty}\int_M w^2(s_m,\cdot)\zeta\mathrm{dV_g}=\int_M \zeta\mu(\mathrm{dV_g}), \quad \forall\zeta\in C(M),
\end{equation}
where $C(M)$ is the set of continuous functions on the manifold $M$.

\section{Proof of Theorem \ref{mmm}}\label{pf}
In view of \eqref{voe}, it is straightforward to establish that
$$
c_*\le\lambda(s)\le c^*,
$$
where $c_*=\min_{p\in M}c(p)$ and  $c^*=\max_{p\in M}c(p)$. It is easy to see that if the function $c(\cdot)$ is constant, i.e., $c(p)\equiv c^*=c_*$, we have $\lambda(s)=c^*$ for all $s$. Therefore, without loss of generality, we assume $c^*>c_*>0$ in the remainder of the proof.

\subsection{Upper bound}
In this subsection, we derive the following lemma to give the upper bound estimate for the principal eigenvalue $\lambda(s)$.

\begin{lemma}\label{up}
For the principal eigenvalue $\lambda(s)$ of the eigenvalue problem \eqref{E}, we have the following upper bound estimate
$$
\limsup_{s\to+\infty}\lambda(s)\le \min_{p\in\mathcal{M}}c(p).
$$
\end{lemma}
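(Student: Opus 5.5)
We test the variational characterization \eqref{voe} with a function concentrated near a maximum point of $f$. First fix $p_0\in\mathcal{M}$ with $c(p_0)=\min_{p\in\mathcal{M}}c(p)$. Here $\mathcal{M}$ is finite, since it consists of non-degenerate critical points of $f$ on the compact manifold $M$. By Lemma \ref{morsel} there is a chart $(U,\varphi)$ around $p_0$ with $\varphi(p_0)=0$ in which $\hat f(x)=f(p_0)-\frac12|x|^2$. Shrinking $U$, we may assume that $\varphi(U)\supset B_{2r}(0)$ for some $r>0$ and that the bounds \eqref{g12}, \eqref{g34} hold on $\overline{B_{2r}(0)}$.

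Given $\delta\in(0,r)$, choose a cutoff $\eta\in C_c^\infty(B_{2\delta}(0))$ with $0\le\eta\le1$, $\eta\equiv1$ on $B_\delta(0)$ and $|\nabla\eta|\le C/\delta$. Set $u=\eta\circ\varphi$, extended by zero outside $U$, so that $u\in H^1(M)$. Then \eqref{voe} gives
\[
\lambda(s)\le \frac{\int_{B_{2\delta}} e^{s\hat f}\bigl(g^{ij}\partial_i\eta\,\partial_j\eta+\hat c\,\eta^2\bigr)\sqrt{\det g}\,dx}{\int_{B_{2\delta}} e^{s\hat f}\eta^2\sqrt{\det g}\,dx}.
\]
Multiply numerator and denominator by $e^{-sf(p_0)}$, so the weight becomes $e^{-s|x|^2/2}$, and bound the potential term by $\hat c\le \max_{B_{2\delta}}\hat c$. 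This gives
\[
\lambda(s)\le \max_{\overline{B_{2\delta}}}\hat c+\frac{e_2e_4C^2\delta^{-2}\int_{\delta\le|x|\le2\delta}e^{-s|x|^2/2}dx}{e_3\int_{B_\delta}e^{-s|x|^2/2}dx}.
\]
The gradient integral is supported in the annulus $\delta\le|x|\le2\delta$, where $e^{-s|x|^2/2}\le e^{-s\delta^2/2}$, so it is at most $C_\delta e^{-s\delta^2/2}$. The denominator, after the change of variables $x=y/\sqrt s$, is at least $s^{-n/2}\int_{B_{\delta\sqrt s}}e^{-|y|^2/2}dy\ge c_n s^{-n/2}$ for $s\ge1$. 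The fraction is therefore $O\bigl(s^{n/2}e^{-s\delta^2/2}\bigr)\to0$. Letting $s\to\infty$ yields $\limsup_{s\to\infty}\lambda(s)\le\max_{\overline{B_{2\delta}}}\hat c$. Finally, letting $\delta\to0$ and using the continuity of $c$ gives the bound $c(p_0)=\min_{p\in\mathcal{M}}c(p)$.

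The main point needing care is the quantitative comparison between the annulus and the ball. It requires only the Morse normal form, which makes $p_0$ a strict local maximum with quadratic decay, together with the uniform metric bounds \eqref{g12} and \eqref{g34}. No curvature enters, because everything happens in a single chart. If one prefers not to rely on the exact normal form, it suffices to have $\hat f(x)\le f(p_0)-a|x|^2$ near $0$ for some $a>0$, which follows from non-degeneracy alone. In that case one uses the lower bound $\hat f\ge f(p_0)-b|x|^2$ on the small ball to bound the denominator from below, and the same computation goes through.
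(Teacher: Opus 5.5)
Your proposal is correct and follows essentially the paper's argument: a cutoff supported near a local maximum in Morse coordinates, the potential term bounded by $\max\hat c$ on the support, the gradient term (supported on an annulus) shown to vanish as $s\to+\infty$, and then the support shrunk to the point. The only difference is cosmetic: you bound the denominator below by Gaussian scaling to get $O(s^{n/2}e^{-s\delta^2/2})$, whereas the paper takes the cutoff equal to $1$ on $B_{2R}$ and compares with the smaller ball $B_R$ to get the explicit factor $e^{-3sR^2}$; note also that your bound $c_n s^{-n/2}$ actually needs $s\ge\delta^{-2}$ (or a $\delta$-dependent constant), which is harmless because $\delta$ is fixed as $s\to+\infty$.
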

\begin{proof}
For any fixed point $p^*\in \mathcal{M} $, there exists a smooth local coordinate chart $(U, \varphi)$. 
Select a sufficiently small $R>0$ such that the ball \( B_{3R}(\varphi(p^*)) \subset\subset \varphi(U)\), where $B_{3R}(\varphi(p^*))$ denotes the ball of radius \( 3R \) centered at \( \varphi(p^*) \). We then define the function $\phi_R:\ \mathbb{R}^n\rightarrow\mathbb{R}$ as follow.
\begin{equation}
\phi_R(x)=\phi_{R}(x^1,x^2,...,x^n)=\begin{cases}
1, &   x \in B_{2R}(\varphi(p^*)), \\
\frac{3R-|x|}{R}, &  x\in \overline{B_{3R}(\varphi(p^*))}\setminus B_{2R}(\varphi(p^*)),\\
0,&  x\in \varphi(U)\setminus \overline{B_{3R}(\varphi(p^*))}.
\end{cases}
\end{equation}
Next, we define the test function $u_R: M\rightarrow \mathbb{R}$ as follow.
\begin{equation}\label{test}
u_R=\begin{cases}
\phi_R\circ\varphi, &   p \in U, \\
0, &  p\in U^c.
\end{cases}
\end{equation}
Thanks to the local coordinate chart $(U, \varphi)$, we have
\begin{align}\label{grad}
&|\nabla_M u_R|^2
=g^{ij}\frac{\partial \hat{u}_R}{\partial x^j}\frac{\partial \hat{u}_R}{\partial x^i}
=g^{ij}\frac{\partial \phi_R}{\partial x^j}\frac{\partial \phi_R}{\partial x^i},
\end{align}
where $\hat{u}_R=u_R \circ\varphi^{-1}$ and the detailed calculation process can be found in \cite{jmlee} or see Section \ref{pre}.
To give the upper bound estimate of $\lambda(s)$, we insert the test function into \eqref{voe}. Thanks to the local coordinate expression \eqref{grad} of the gradient of $u_R$, and the basic coordinate transformation formula, we obtain
\begin{eqnarray}
\lambda(s)
&\le&
\frac{\int_{U} e^{sf }\left(|\nabla_M u_R|^2+c |u_R|^2\right)\mathrm{dV_g}}{\int_{U}  e^{ sf}u_R^2\mathrm{dV_g}  }\nonumber\\
&=&\frac{\int_{\varphi(U)} e^{s\hat{f} } g^{ij}\frac{\partial \phi_R}{\partial x^i}\frac{\partial \phi_R}{\partial x^j}\sqrt{\det g_{ij} }\mathrm{d}x }{\int_{\varphi(U)}  e^{ s\hat{f}}\phi_R^2\sqrt{\det g_{ij}} \mathrm{d}x  }+\frac{\int_{\varphi(U)}  e^{ s\hat{f}}\hat{c}\phi_R^2\sqrt{\det g_{ij}} \mathrm{d}x  }{\int_{\varphi(U)}  e^{ s\hat{f}} \phi_R^2\sqrt{\det g_{ij}}\mathrm{d}x  }\nonumber\\
&\triangleq&I+J,
\end{eqnarray}
where
\begin{equation}
I=\frac{\int_{\varphi(U)} e^{s\hat{f} } g^{ij}\frac{\partial \phi_R}{\partial x^i}\frac{\partial \phi_R}{\partial x^j}\sqrt{\det g_{ij}}\mathrm{d}x }{\int_{\varphi(U)}  e^{ s\hat{f}}\phi_R^2\sqrt{\det g_{ij}} \mathrm{d}x  },\quad J=\frac{\int_{\varphi(U)}  e^{ s\hat{f}}\hat{c}\phi_R^2\sqrt{\det g_{ij}} \mathrm{d}x  }{\int_{\varphi(U)}  e^{ s\hat{f}} \phi_R^2\sqrt{\det g_{ij}}\mathrm{d}x  },
\end{equation}
and $\hat{f}=f\circ \varphi^{-1}$, $\hat{c}=c\circ \varphi^{-1}$ are the local coordinate representations of $f$ and $c$, respectively.

We select the fixed point $p$ as a maximum point of $f$ on $M$. According to the Morse Lemma, in the local coordinate $(U, \varphi)$ containing  the local maximum point  $p$ of the Morse function $f$, we have
\begin{equation}\label{mp}
\hat{f}(x)=f\circ \varphi^{-1}(x)=f(p^*)-\sum_{i=1}^n(x^i)^2, \quad x\in \varphi(U).
\end{equation}

For simplicity, we use $C$ represents the positive constant which may change from line to line in the following proof.
Then, for term $I$, by direct calculation, we obtain
\begin{eqnarray}\label{I}
I&=&\frac{\int_{\varphi(U)} e^{s\hat{f} } g^{ij}\frac{\partial \phi_R}{\partial x^i}\frac{\partial \phi_R}{\partial x^j}\sqrt{\det g_{ij}}\mathrm{d}x }{\int_{\varphi(U)}  e^{ s\hat{f}}\phi_R^2\sqrt{\det g_{ij}} \mathrm{d}x  }
\le\frac{C\int_{\varphi(U)} e^{s\hat{f} } |\nabla \phi_R|^2\mathrm{d}x }{ \int_{\varphi(U)}  e^{ s\hat{f}}\phi_R^2  \mathrm{d}x } \nonumber\\
&\le&\frac{C\int_{ \overline{B_{3R}(\varphi(p^*))}\setminus B_{2R}(\varphi(p^*))} e^{s(f(p^*)-\sum_{i=1}^n (x^i)^2) } \mathrm{d}x }{R^2\int_{B_{R}(\varphi(p^*))}  e^{ s(f(p^*)-\sum_{i=1}^n (x^i)^2)}  \mathrm{d}x } \nonumber\\
&\le&\frac{C|B_{3R} | e^{s(f(p^*)-4R^2) }   }{R^2 |B_{R} |  e^{ s(f(p^*)-R^2)}   }
=\frac{C|B_{3R} |    }{R^2 |B_{R} |     }e^{-3sR^2}, \nonumber\\
\end{eqnarray}
where we use \eqref{g12}, \eqref{g34} in the first inequality and \eqref{mp} in the second inequality.

For the second  term, it is obvious to verify that
\begin{eqnarray}\label{J}
J=\frac{\int_{\varphi(U)}  e^{ s\hat{f}}\hat{c}\phi_R^2\sqrt{\det g_{ij}} \mathrm{d}x  }{\int_{\varphi(U)}  e^{ s\hat{f}} \phi_R^2\sqrt{\det g_{ij}}\mathrm{d}x  }\le \max_{\overline{B_{3R}(\varphi(p^*))}}\hat{c}.
 \end{eqnarray}

 Therefore, we complete the estimate and obtain
\begin{eqnarray}\label{IJ}
\lambda(s)\le I+J\le \frac{C|B_{3R}|    }{R^2|B_{R}|     }e^{-3sR^2}+ \max_{\overline{B_{3R}(\varphi(p^*))}}\hat{c}.
 \end{eqnarray}
 Sending $s\to+\infty$ first, it is easily seen that the first term of the right hand of \eqref{IJ} tends to $0$. Since $\varphi$ is a diffeomorphism, letting $R\to 0$ yields that
 $$
\limsup_{s\to\infty} \lambda(s)\le \hat{c}(\varphi(p^*))=c\circ \varphi^{-1}(\varphi(p^*))=c(p^*).
 $$
 Note that this estimating is valid for any maximum points of the Morse function $f$, and then we obtain that
 $$
 \limsup_{s\to\infty}\lambda(s)\le \min_{\mathcal{M}}c(p).
 $$
\end{proof}

\subsection{Lower bound}
Recall in Section \ref{pre}, we denote by $v(s,\cdot)=e^{-\frac{sf}{2}}w(s,\cdot)$ as the positive principal eigenfunction corresponding to $\lambda(s)$, satisfying
$$
\int_{M} e^{sf }v^2\mathrm{dV_g}=\int_{M}  w^2\mathrm{dV_g}=1.
$$
By Prokhorov's Theorem \cite{yan}, we could select a subsequence $\{s_m\}_{m=1}^{+\infty}$ such that
$$
\lim_{m\to+\infty}s_m=+\infty, \text{ and } \lim_{m\to+\infty}\lambda(s_m)=\liminf_{s\to+\infty}\lambda(s)\triangleq\lambda_*,\ \lim_{m\to+\infty}w(s_m,\cdot)=\mu\ \text{in measure}.
$$

Recall that we assume $c^*>c_*>0$. From the variational characterization and the boundedness of $\lambda(s)$, we have the following useful inequality
\begin{eqnarray}\label{key}
c^*-c_*\ge  \int_{M} e^{sf }|\nabla_M v|^2\mathrm{dV_g},
\end{eqnarray}
which will be used several times later.

To present the lower bound estimate of $\lambda(s)$, we divide it into two cases: the non-critical points and  the critical points which are not local maxima.

\subsubsection{The non-critical points}
We denote the non-critical points of $f$ by the set
\[
M_1 = \left\{ p \in M \,\left|\,
\begin{aligned}
& \text{For any local coordinate chart } (U, \varphi), \text{ the Riemannian} \\
& \text{metric } g_{ij} \text{ satisfies }
 g^{ij} \frac{\partial \hat{f}}{\partial x^i} \frac{\partial \hat{f}}{\partial x^j} > \delta \text{ for }p\in U \text{ i.e., } x \in \varphi(U)
\end{aligned} \right.
\right\}
\]
where $\delta$ is a positive constant. Then we have the following lemma.
\begin{lemma}\label{m1}
$\mu(M_1)=0$.
\end{lemma}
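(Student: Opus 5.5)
The plan is to exploit the Euclidean-style identity obtained by substituting $v=e^{-sf/2}w$ into the energy bound \eqref{key}, and to test it against a gradient-type vector field supported where $|\nabla_M f|^2>\delta$. Write $v=e^{-sf/2}w$, so that $\nabla_M v=e^{-sf/2}\bigl(\nabla_M w-\tfrac{s}{2}w\nabla_M f\bigr)$. Then \eqref{key} becomes
\begin{equation*}
\int_M \Bigl|\nabla_M w-\frac{s}{2}w\nabla_M f\Bigr|^2\mathrm{dV_g}\le c^*-c_*.
\end{equation*}
Expanding the square, the cross term is $-\frac{s}{2}\int_M\langle\nabla_M (w^2),\nabla_M f\rangle_g\mathrm{dV_g}=\frac{s}{2}\int_M w^2\Delta_M f\,\mathrm{dV_g}$. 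Here we use the divergence theorem on the closed orientable manifold, which produces no boundary term. This gives
\begin{equation*}
\frac{s^2}{4}\int_M w^2|\nabla_M f|^2\mathrm{dV_g}\le c^*-c_*-\frac{s}{2}\int_M w^2\Delta_M f\,\mathrm{dV_g}\le C(1+s),
\end{equation*}
where we use $\int_M w^2=1$ and the boundedness of $\Delta_M f$ on compact $M$. Hence $\int_M w^2|\nabla_M f|^2\mathrm{dV_g}\le C/s$.

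Next we localize. The set where $|\nabla_M f|^2>\delta$ (which is what $M_1$ describes chart-wise) is open. We take a continuous cutoff $\zeta$ with $0\le\zeta\le1$, equal to $1$ on $\{|\nabla_M f|^2\ge 2\delta'\}$ and supported in $\{|\nabla_M f|^2>\delta'\}$, for any $\delta'<\delta$. Then
\begin{equation*}
\int_M w^2(s_m,\cdot)\zeta\,\mathrm{dV_g}\le \frac{1}{\delta'}\int_M w^2|\nabla_M f|^2\mathrm{dV_g}\le \frac{C}{\delta' s_m}\to0.
\end{equation*}
By the weak convergence \eqref{weak}, $\int\zeta\,d\mu=0$. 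Letting the cutoff increase to the indicator of the open set $\{|\nabla_M f|^2>\delta\}$, monotone convergence (or inner regularity of $\mu$) gives $\mu(M_1)=0$. Note that we only need $\zeta\in C(M)$; a simpler alternative is to use $\zeta=\min(|\nabla_M f|^2/\delta,1)$ directly, or even $\zeta=|\nabla_M f|^2$ itself, which gives $\int|\nabla_M f|^2d\mu=0$ at once. That shows $\mu$ is supported on the critical set, which contains the claim for every $\delta>0$.

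The main obstacle is justifying the integration by parts for $w\in H^1$ and the regularity needed. Since $v$ is a smooth classical eigenfunction by elliptic regularity on the closed manifold, this is routine. The chart-wise phrasing of $M_1$ is handled by noting that $g^{ij}\partial_i\hat f\partial_j\hat f$ is exactly $|\nabla_M f|^2$, which is chart-independent.
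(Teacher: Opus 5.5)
Your argument is correct, but it follows a different route from the paper's.

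\textbf{Your route.} You integrate by parts once, globally, using that $M$ is closed so no boundary term appears. This gives
$\frac{s^2}{4}\int_M w^2|\nabla_M f|^2\,\mathrm{dV_g}\le c^*-c_*+\frac{s}{2}\max_M|\Delta_M f|$,
hence $\int_M |\nabla_M f|^2\,d\mu=0$, so $\mu$ is carried by the critical set of $f$. The cleanest finish is the Chebyshev bound $\mu(\{|\nabla_M f|^2>\delta\})\le \delta^{-1}\int_M|\nabla_M f|^2\,d\mu=0$. Your family of cutoffs $\zeta$ is not needed; if you do use it, taking $\delta'\le\delta/2$ already makes $\zeta=1$ on $\{|\nabla_M f|^2>\delta\}$, so no limiting step is required.

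\textbf{The paper's route.} The paper works chart by chart.
\begin{enumerate}
\item Around each $p^*\in M_1$ it inserts a cutoff $\eta_R$, expands $|\nabla_M v|^2$ in coordinates with the weight $\sqrt{\det g_{ij}}$, and integrates by parts.
\item It absorbs the commutator term $s\hat w^2\eta_R g^{ij}\partial_i\hat f\,\partial_j\eta_R$ by Young's inequality, at a cost of $C/R^2$.
\item It uses $g^{ij}\partial_i\hat f\,\partial_j\hat f>\delta$ on the support of $\eta_R$ to get $\frac{s(\delta s-C)}{8}\int\eta_R^2\hat w^2\sqrt{\det g_{ij}}\,dx\le c^*-c_*+C/R^2$.
\item It concludes that $\mu$ vanishes on a neighborhood of each point of $M_1$, and passes to $\mu(M_1)=0$ via a countable covering that is left implicit.
\end{enumerate}

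\textbf{Comparison.} Your version is shorter and coordinate-free, and it gives a rate $O(1/s)$. It sidesteps the covering step and the awkward chart-dependent wording of the definition of $M_1$, and it proves a stronger conclusion that does not depend on $\delta$.

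The paper's version buys locality. Apart from the global bound \eqref{key}, it never uses that $M$ has no boundary. More importantly, it is the template needed for Lemma \ref{m2}. There one must localize and integrate by parts only along a coordinate direction $x^l$ with $\partial^2\hat f/\partial (x^l)^2=1$. Your global identity alone cannot separate local maxima from saddles or minima, since $|\nabla_M f|$ vanishes at all of them.
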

\begin{proof}
Fix an arbitrary point   $p^*\in M_1$.
Define a smooth cut-off function $\eta:\mathbb{R}^n\rightarrow \mathbb{R}$ such that
 $$
 \eta=1,\ \text{in } B_1(0), \quad \eta=0,\ \text{in }\mathbb{R}^n\setminus B_2(0), \quad 0\le\eta\le 1,\ \text{and } |\nabla\eta|\le C \text{ in }B_2(0).
 $$
Choose a sufficiently small $R$ such that $B_{3R}(\varphi(p^*))\subset\subset \varphi(U)$. Define the rescaled cut-off function $\eta_{R}=\eta(\frac{x-\varphi(p^*)}{R})$, which satisfies $0\leq\eta_{R}\leq 1$ and $|\nabla \eta_{R}|\leq \frac{C}{R}$ for $x\in\varphi(U)$.
We then define a smooth cut-off function $\xi$ on the manifold $M$ by
 \begin{equation}
 \xi=\begin{cases}
 \eta_R\circ\varphi, & p\in U,\\
 0,&p\in M\setminus U,
 \end{cases}
 \end{equation}
which satisfies $|\eta_R\circ\varphi|\le 1$, where $\varphi $ is a diffeomorphism defined on the neighborhood $U$ of $p^*$ in $M$.

By using \eqref{key}, and the transformation $w=e^{\frac{s f}{2}}v$, we have the following estimate.
\begin{align}\label{delta}
c^*-c_* 
\ge & \int_{U}\xi^2 e^{sf }|\nabla_M v|^2\mathrm{d V_g}\nonumber\\
= & \int_{\varphi(U)}\eta_R^2 e^{s\hat{f} }g^{ij}\frac{\partial \hat{v}}{\partial x^i}\frac{\partial \hat{v}}{\partial x^j}\sqrt{\det{g_{ij}}}\mathrm{d} x\nonumber\\
=&\int_{\varphi(U)}\eta_R^2  g^{ij}\left(\frac{\partial \hat{w}}{\partial x^i}-\frac{s}{2}\hat{w}\frac{\partial \hat{f}}{\partial x^i}\right)\left(\frac{\partial \hat{w}}{\partial x^j}-\frac{s}{2}\hat{w}\frac{\partial \hat{f}}{\partial x^j}\right)\sqrt{\det{g_{ij}}}\mathrm{d}x\nonumber\\
=&\int_{\varphi(U)}\eta_R^2  g^{ij}\frac{\partial \hat{w}}{\partial x^i}\frac{\partial \hat{w}}{\partial x^j}\sqrt{\det{g_{ij}}}\mathrm{d}x+\frac{s^2}{4}\int_{\varphi(U)}\eta_R^2  g^{ij}\hat{w}^2\frac{\partial \hat{f}}{\partial x^i}\frac{\partial \hat{f}}{\partial x^j}\sqrt{\det{g_{ij}}}\mathrm{d}x\nonumber\\
&-s\int_{\varphi(U)} \eta_R^2g^{ij}\hat{w}\frac{\partial \hat{w}}{\partial x^j}\frac{\partial \hat{f}}{\partial x^i}\sqrt{\det{g_{ij}}} \mathrm{d}x\nonumber\\
=&\int_{\varphi(U)}\eta_R^2  g^{ij}\frac{\partial \hat{w}}{\partial x^i}\frac{\partial \hat{w}}{\partial x^j}\sqrt{\det{g_{ij}}}\mathrm{d}x+\frac{s^2}{4}\int_{\varphi(U)}\eta_R^2  g^{ij}\hat{w}^2\frac{\partial \hat{f}}{\partial x^i}\frac{\partial \hat{f}}{\partial x^j}\sqrt{\det{g_{ij}}}\mathrm{d}x\nonumber\\
&+\frac{s}{2}\int_{\varphi(U)}\hat{w}^2\frac{\partial }{\partial x^j}\left(\eta_R^2g^{ij}\frac{\partial \hat{f}}{\partial x^i}\sqrt{\det{g_{ij}}}\right)\mathrm{d}x\nonumber\\
\ge&\frac{s^2}{4}\int_{\varphi(U)} \eta_R^2  g^{ij}\hat{w}^2\frac{\partial \hat{f}}{\partial x^i}\frac{\partial \hat{f}}{\partial x^j}\sqrt{\det{g_{ij}}}\mathrm{d}x +\frac{s}{2}\int_{\varphi(U)}\hat{w}^2\frac{\partial }{\partial x^j}\left(\eta_R^2g^{ij}\frac{\partial \hat{f}}{\partial x^i}\sqrt{\det{g_{ij}}}\right)\mathrm{d}x\nonumber\\
\triangleq& K_1+K_2,
\end{align}
where $\hat{(\cdot)}=(\cdot)\circ \varphi^{-1}$, the forth equality follows from integration by parts, and the final inequality uses the fact that
$$
g^{ij}\frac{\partial \hat{w}}{\partial x^i}\frac{\partial \hat{w}}{\partial x^j}\ge 0.
$$
 Here
 $$
 K_1=\frac{s^2}{4}\int_{\varphi(U)} \eta_R^2  g^{ij}\hat{w}^2\frac{\partial \hat{f}}{\partial x^i}\frac{\partial \hat{f}}{\partial x^j}\sqrt{\det{g_{ij}}}\mathrm{d}x,\ K_2=\frac{s}{2}\int_{\varphi(U)}\hat{w}^2\frac{\partial }{\partial x^j}\left(\eta_R^2g^{ij}\frac{\partial \hat{f}}{\partial x^i}\sqrt{\det{g_{ij}}}\right)\mathrm{d}x.
 $$

Note that the following inequality can be easily verified using Young's inequality:
\begin{align}\label{Y}
&s\int_{\varphi(U)}\left|\hat{w}^2\eta_R g^{ij}\frac{\partial \hat{f}}{\partial x^i}\frac{\partial \eta_R}{\partial x^j}\sqrt{\det{g_{ij}}} \right|\mathrm{d}x\nonumber\\
\le& \frac{ s^2}{8}\int_{\varphi(U)}\hat{w}^2\eta_R^2 g^{ij}\frac{\partial \hat{f}}{\partial x^i}\frac{\partial \hat{f}}{\partial x^j}\sqrt{\det{g_{ij}}} \mathrm{d}x+2\int_{\varphi(U)}\hat{w}^2g^{ij}\frac{\partial \eta_R}{\partial x^i}\frac{\partial \eta_R}{\partial x^j}\sqrt{\det{g_{ij}}} \mathrm{d}x.
\end{align}
There exists a positive constant $C$ such that
\begin{equation}\label{CDET}
\left|\frac{\partial }{\partial x^j}\left( g^{ij}\frac{\partial \hat{f}}{\partial x^i}\sqrt{\det{g_{ij}}}\right)\right|\le C\sqrt{\det g_{ij}},
\end{equation}
which follows the positivity of $\det{g_{ij}}$ and the smoothness of the functions involved.

Combining with \eqref{Y} and \eqref{CDET}, we estimate $K_2$ and  obtain
\begin{align*}
|K_2|\le&\frac{s}{2}\int_{\varphi(U)}\left|\hat{w}^2\frac{\partial }{\partial x^j}\left(\eta_R^2g^{ij}\frac{\partial \hat{f}}{\partial x^i}\sqrt{\det{g_{ij}}}\right)\right|\mathrm{d}x\nonumber\\
=&s\int_{\varphi(U)}\!\left|\hat{w}^2\eta_R g^{ij}\frac{\partial \hat{f}}{\partial x^i}\frac{\partial \eta_R}{\partial x^j}\sqrt{\det{g_{ij}}} \right|\mathrm{d}x
+\frac{s}{2}\int_{\varphi(U)}\!\hat{w}^2\eta_R^2\left|\frac{\partial }{\partial x^j}\left( g^{ij}\frac{\partial \hat{f}}{\partial x^i}\sqrt{\det{g_{ij}}}\right)\right|\mathrm{d}x \nonumber\\
\le& \frac{ s^2}{8}\int_{\varphi(U)}\hat{w}^2\eta_R^2 g^{ij}\frac{\partial \hat{f}}{\partial x^i}\frac{\partial \hat{f}}{\partial x^j}\sqrt{\det{g_{ij}}} \mathrm{d}x+2\int_{\varphi(U)}\hat{w}^2g^{ij}\frac{\partial \eta_R}{\partial x^i}\frac{\partial \eta_R}{\partial x^j}\sqrt{\det{g_{ij}}} \mathrm{d}x \\
&+ Cs \int_{\varphi(U)} \hat{w}^2\eta_R^2 \sqrt{\det g_{ij}} \mathrm{d}x\nonumber\\
\le& \frac{ s^2}{8}\int_{\varphi(U)}\hat{w}^2\eta_R^2 g^{ij}\frac{\partial \hat{f}}{\partial x^i}\frac{\partial \hat{f}}{\partial x^j}\sqrt{\det{g_{ij}}} \mathrm{d}x+\frac{C}{R^2}\int_{\varphi(U)}\hat{w}^2\sqrt{\det g_{ij}} \mathrm{d}x
\\
&+Cs\int_{\varphi(U)} \hat{w}^2\eta_R^2 \sqrt{\det g_{ij}} \mathrm{d}x,
\end{align*}
where we use the fact that
$$
g^{ij}\frac{\partial  \eta_R}{\partial x^i}\frac{\partial  \eta_R}{\partial x^j}\le C |\nabla \eta_R|^2\le \frac{C}{R^2}
$$
for some constant $C>0$
in the last inequality.

Recalling \eqref{delta} and the definition of the set $M_1$,  we get
\begin{align}\label{K}
c^*-c_*
\ge & K_1+K_2
\ge K_1-|K_2|\nonumber\\
\ge& \frac{s^2 }{8} \int_{\varphi(U)} \eta_R^2 \hat{w}^2 g^{ij}\frac{\partial \hat{f}}{\partial x^i}\frac{\partial \hat{f}}{\partial x^j}\sqrt{\det{g_{ij}}}\mathrm{d}x-\frac{C}{R^2}\int_{\varphi(U)}\hat{w}^2\sqrt{\det g_{ij}} \mathrm{d}x
\nonumber\\
-&Cs\int_{\varphi(U)} \eta_R^2\hat{w}^2 \sqrt{\det g_{ij}} \mathrm{d}x\nonumber\\
\ge& \frac{s( \delta s-C)}{8} \int_{\varphi(U)} \eta_R^2 \hat{w}^2 \sqrt{\det{g_{ij}}}\mathrm{d}x-\frac{C}{R^2}\int_{U}e^{sf}v^2\mathrm{d V_g} \nonumber\\
\ge& \frac{s( \delta s-C)}{8} \int_{\varphi(U)} \eta_R^2 \hat{w}^2 \sqrt{\det{g_{ij}}}\mathrm{d}x-\frac{C}{R^2}.
\end{align}
Hence, in view of \eqref{K} and \eqref{weak},  we obtain that
\begin{align*}
0
&=\lim_{m\to+\infty}\int_{\varphi(U)} \eta_R^2 \hat{w}(s_m,x)^2 \sqrt{\det{g_{ij}}}\mathrm{d}x\nonumber\\
&=\lim_{m\to+\infty}\int_{M} \xi^2 w^2(s_m,p) \mathrm{d V_g}\nonumber\\
&=\int_M \xi^2\mu(\mathrm{d V_g})\nonumber\\
&\ge\mu(\varphi^{-1}(B_{\frac{R}{2}}(\varphi(p^*)))),
\end{align*}
according to $\xi=1$ in $\varphi^{-1}(B_{\frac{R}{2}}(\varphi(p^*)))$. Therefore we conclude that $\mu(M_1)=0$.
\end{proof}

\subsubsection{Critical points which are not local maxima}

Recalling Lemma \ref{morsel}, we denote the  critical points which are not local maxima by the set
\begin{equation}
M_2 \!=\! \left\{ p \in M \!\left|\,
\begin{aligned}
& df_p = 0, \text{ there exists a local coordinate chart } (U, \varphi)\text{ with}\\
&\text{Riemannian metric } g_{ij} \text{ such that} \\
& f \circ \varphi^{-1}(x) \!=\! f(p) - \frac{1}{2} \sum_{i=1}^{m_1} \!(x^i)^2 + \frac{1}{2} \sum_{i=m_1+1}^{n} \!(x^i)^2,\ x \in \varphi(U),\ 0\leq m_1 < n
\end{aligned} \right.
\right\}
\end{equation}
Based on the definition of $M_2$, we have the following lemma.
\begin{lemma}\label{m2}
$\mu(M_2)=0$.
\end{lemma}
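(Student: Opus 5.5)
Fix $p^*\in M_2$. By Lemma \ref{morsel} we work in the Morse chart $(U,\varphi)$ with $\varphi(p^*)=0$ and
\[
\hat f(x)=f(p^*)-\tfrac12\sum_{i=1}^{m_1}(x^i)^2+\tfrac12\sum_{i=m_1+1}^n(x^i)^2,
\]
where $m_1<n$. So there is at least one ascending direction, say $x^n$, with $\partial_n\hat f=x^n$. It suffices to show $\mu(\varphi^{-1}(B_r(0)))=0$ for some small $r$. Since $M_2$ is finite (nondegenerate critical points on a compact manifold are isolated), this gives $\mu(M_2)=0$.

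The argument is a weighted version of the integration by parts in \eqref{delta}, using the vector field $\partial_n$ instead of the full gradient, so that the divergence term has a \emph{good} sign. Take the cutoff $\xi=\eta_R\circ\varphi$ as in Lemma \ref{m1}. In coordinates, write the weighted Dirichlet energy as $\int \eta_R^2 e^{s\hat f}g^{ij}\partial_i\hat v\partial_j\hat v\sqrt{\det g}\,dx\le c^*-c_*$ from \eqref{key}, and set $\hat w=e^{s\hat f/2}\hat v$. Then consider the quantity
\[
Q=\int \eta_R^2\,\hat w\,\partial_n\hat w\,\sqrt{\det g}\,dx=-\tfrac12\int \hat w^2\,\partial_n\bigl(\eta_R^2\sqrt{\det g}\bigr)dx .
\]
On the other hand, $\partial_n\hat w=e^{s\hat f/2}\partial_n\hat v+\tfrac s2 x^n\hat w$. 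Multiplying by $\eta_R^2\hat w\sqrt{\det g}$ gives
\[
\tfrac s2\int\eta_R^2 (x^n)\hat w^2\sqrt{\det g}\,dx=Q-\int\eta_R^2\hat w e^{s\hat f/2}\partial_n\hat v\sqrt{\det g}\,dx .
\]
The $x^n$ factor has no sign, so this exact identity is too weak. Instead I would use the \emph{flux in the $x^n$ direction with weight}. One option is to repeat the computation \eqref{delta}, but with the quadratic identity
\[
\bigl|\partial_n\hat w-\tfrac s2 x^n\hat w\bigr|^2=|\partial_n\hat w|^2+\tfrac{s^2}{4}(x^n)^2\hat w^2-\tfrac s2 x^n\partial_n(\hat w^2).
\]
After integrating by parts the last term becomes $+\tfrac s2\int\hat w^2\partial_n(x^n\eta_R^2\sqrt{\det g})$, whose principal part is $+\tfrac s2\int\eta_R^2\hat w^2\sqrt{\det g}$. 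This is positive of order $s$, since $\partial_n x^n=1>0$ for the ascending direction. The error terms are $s\int\hat w^2 x^n\,\partial_n(\eta_R^2\sqrt{\det g})$. By Young's inequality these are bounded by $\tfrac{s^2}{8}\int\eta_R^2 (x^n)^2\hat w^2+C R^{-2}\int\hat w^2$ on the cutoff gradient, plus $Cs\int\eta_R^2|x^n|\hat w^2$ from the derivative of $\sqrt{\det g}$. The latter is at most $CsR\int\eta_R^2\hat w^2$, which is absorbed for $R$ small. Finally, $|\partial_n\hat v|^2\le C g^{ij}\partial_i\hat v\partial_j\hat v$ by \eqref{g12}. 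Collecting terms gives
\[
c^*-c_*\ \ge\ \frac{s}{2}(1-CR)\int\eta_R^2\hat w^2\sqrt{\det g}\,dx-\frac{C}{R^2},
\]
after dividing constants appropriately.

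Dividing by $s$ and letting $s=s_m\to\infty$ shows $\int\xi^2w^2(s_m)\,dV_g\to0$. Then \eqref{weak} gives $\mu(\varphi^{-1}(B_{R}(0)))=0$, as in the end of Lemma \ref{m1}.

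The main obstacle is the sign bookkeeping in the middle step. The direct identity from \eqref{delta} uses the full gradient, whose divergence $\Delta\hat f=n-2m_1$ in Euclidean terms may be negative. That is why I restrict to the single ascending coordinate $x^n$: there the divergence of the drift component is $+1$. Some care is also needed to compare the Euclidean component $|\partial_n\hat v|^2$ with the metric energy, but \eqref{g12} and \eqref{g34} handle this up to constants. The metric-derivative errors are $O(sR)$ and are absorbed by shrinking $R$.
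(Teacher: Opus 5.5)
Your proposal is correct and is essentially the paper's proof. You bound the energy \eqref{key} from below via \eqref{g12} by the single ascending component $\int\eta_R^2\bigl|\partial_n\hat w-\tfrac s2\,\partial_n\hat f\,\hat w\bigr|^2\sqrt{\det g_{ij}}\,dx$, expand, and integrate the cross term by parts so that $\partial_n^2\hat f=1$ gives the positive order-$s$ term, absorbing the cutoff error by Young's inequality as in \eqref{NM1}; the only harmless deviation is that you absorb the $\partial_n\sqrt{\det g_{ij}}$ error into the order-$s$ term using $|x^n|\le 2R$ on $\operatorname{supp}\eta_R$, whereas the paper absorbs it into the $s^2$ term via Young's inequality in \eqref{NM2}, and your opening $Q$-identity is an unnecessary detour.
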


\begin{proof}
Fix an arbitrary point $p^*\in M_2$.
By the definition of set $M_2$, there exists an local coordinate $(U, \varphi)$ and an integer  $l\in\{m_1+1,...,n\}$ such that
$$
 \frac{\partial^2( f\circ \varphi^{-1})}{\partial (x^l)^2}=\frac{\partial^2 \hat{f}}{\partial (x^l)^2}=1,\quad x\in \varphi(U).
$$

Set $\eta_R=\eta(\frac{x-\varphi(p^*)}{R})$ where $\eta_R$ is the cut-off function defined in the proof of Lemma \ref{m1} with
\begin{equation}\label{bound}
|\eta_R|\le 1,\quad
\left(\frac{\partial \eta_R}{\partial x^l}\right)^2\le|\nabla \eta_R|^2\le \frac{C}{R^2} \text{ for } x\in \varphi(U).
\end{equation}

Prior to presenting the proof, we first establish several inequalities that will facilitate subsequent analysis.
In view of the Young's inequality and \eqref{bound}, we obtain the following inequalities
\begin{align}\label{NM1}
&\left|\int_{\psi(U)}s\hat{w}^2\eta_R \frac{\partial \eta_R }{\partial x^l}\frac{\partial \hat{f}}{\partial x^l} \sqrt{\det{g_{ij}}}\mathrm{d}x\right|\nonumber\\
\le&\epsilon s^2\int_{\psi(U)}\hat{w}^2\eta_R^2\left(\frac{\partial \hat{f}}{\partial x^l}\right)^2\sqrt{\det g_{ij}}\mathrm{d}x+\frac{C}{\epsilon}\int_{\psi(U)}\hat{w}^2\left(\frac{\partial \eta_R}{\partial x^l}\right)^2\sqrt{\det g_{ij}}\mathrm{d}x\nonumber\\
\le&\epsilon s^2\int_{\psi(U)}\hat{w}^2\eta_R^2 \left(\frac{\partial \hat{f}}{\partial x^l}\right)^2\sqrt{\det g_{ij}}\mathrm{d}x+\frac{C}{\epsilon R^2}\int_{\psi(U)}\hat{w}^2\sqrt{\det g_{ij}}\mathrm{d}x\nonumber\\
=&\epsilon s^2\int_{\psi(U)}\hat{w}^2\eta_R^2 \left(\frac{\partial \hat{f}}{\partial x^l}\right)^2\sqrt{\det g_{ij}}\mathrm{d}x+\frac{C}{\epsilon R^2}\int_{U}w^2\mathrm{d V_g}\nonumber\\
\le&\epsilon s^2\int_{\psi(U)}\hat{w}^2 \eta_R^2\left(\frac{\partial \hat{f}}{\partial x^l}\right)^2\sqrt{\det g_{ij}}\mathrm{d}x+\frac{C}{\epsilon R^2},
\end{align}
where $\epsilon$ is a positive constant that will be determined later.

Thanks again to the Young's inequality and the positivity of the metric $g$, we obtain
\begin{align}\label{NM2}
&\frac{s}{2}\left|\int_{\psi(U)}\hat{w}^2\eta_R^2\frac{\partial \hat{f}}{\partial x^l}\frac{\partial \sqrt{\det{g_{ij}}}}{\partial x^l}\mathrm{d}x\right|\nonumber\\
=&\left|\int_{\psi(U)}\left(\frac{s}{2}\hat{w}\eta_R\frac{\partial \hat{f}}{\partial x^l}(\det g_{ij})^{\frac{1}{4}}\right)\left(\hat{w}\eta_R(\det g_{ij})^{-\frac{1}{4}}\frac{\partial \sqrt{\det{g_{ij}}}}{\partial x^l}\right)\mathrm{d}x\right|\nonumber\\
\le&\frac{\epsilon s^2}{4}\int_{\psi(U)}\hat{w}^2\eta_R^2\left(\frac{\partial \hat{f}}{\partial x^l}\right)^2\sqrt{\det g_{ij}}\mathrm{d}x+\frac{C}{\epsilon}\int_{\psi(U)}\hat{w}^2\eta_R^2(\det g_{ij})^{-\frac{1}{2}}\left(\frac{\partial \sqrt{\det{g_{ij}}}}{\partial x^l}\right)^2\mathrm{d}x\nonumber\\
\le&\frac{\epsilon s^2}{4}\int_{\psi(U)}\hat{w}^2\eta_R^2\left(\frac{\partial \hat{f}}{\partial x^l}\right)^2\sqrt{\det g_{ij}}\mathrm{d}x+\frac{C}{\epsilon}\int_{U}w^2\mathrm{d V_g}\nonumber\\
\le&\frac{\epsilon s^2}{4}\int_{\psi(U)}\hat{w}^2\eta_R^2\left(\frac{\partial \hat{f}}{\partial x^l}\right)^2\sqrt{\det g_{ij}}\mathrm{d}x+\frac{C}{\epsilon}.
\end{align}
By using \eqref{key}, and the transformation $\hat{w}=e^{\frac{s\hat{f}}{2}}\hat{u}$, we have the following estimate:
\begin{align}\label{est2}
&c^*-c_* \nonumber\\
\ge&\int_{\psi(U)}\eta_R^2  g^{ij}\left(\frac{\partial \hat{w}}{\partial x^i}-\frac{s}{2}\hat{w}\frac{\partial \hat{f}}{\partial x^i}\right)\left(\frac{\partial \hat{w}}{\partial x^j}-\frac{s}{2}\hat{w}\frac{\partial \hat{f}}{\partial x^j}\right)\sqrt{\det{g_{ij}}}\mathrm{d}x\nonumber\\
\ge &C\int_{\psi(U)}\eta_R^2  \left|\nabla \hat{w}-\frac{s}{2}\hat{w}\nabla \hat{f}\right|^2\sqrt{\det{g_{ij}}}\mathrm{d}x\nonumber\\
\ge& C\int_{\psi(U)}\eta_R^2  \left|  \frac{\partial \hat{w}}{\partial x^l}-\frac{s}{2}\hat{w}\frac{\partial \hat{f}}{\partial x^l} \right|^2\sqrt{\det{g_{ij}}}\mathrm{d}x\nonumber\\
=& C\int_{\psi(U)}\eta_R^2\left[\left(\frac{\partial \hat{w}}{\partial x^l}\right)^2-s\hat{w}\frac{\partial \hat{f}}{\partial x^l}  \frac{\partial \hat{w}}{\partial x^l}+  \frac{s^2}{4}\hat{w}^2\left(\frac{\partial \hat{f}}{\partial x^l} \right)^2\right]\sqrt{\det{g_{ij}}}\mathrm{d}x\nonumber\\
\ge& C\left[\frac{s}{2}\int_{\psi(U)}\!\!\hat{w}^2 \frac{\partial  }{\partial x^l}\left(\eta_R^2\frac{\partial \hat{f}}{\partial x^l} \sqrt{\det{g_{ij}}}\right)\mathrm{d}x+\frac{s^2}{4}\int_{\psi(U)}\eta_R^2 \hat{w}^2\left(\frac{\partial \hat{f}}{\partial x^l} \right)^2\sqrt{\det{g_{ij}}}\mathrm{d}x\right] \nonumber\\
=& C \left[s\int_{\psi(U)}\hat{w}^2\eta_R \frac{\partial \eta_R }{\partial x^l}\frac{\partial \hat{f}}{\partial x^l} \sqrt{\det{g_{ij}}}\mathrm{d}x+\frac{s}{2}\int_{\psi(U)}\hat{w}^2\eta_R^2 \frac{\partial^2 \hat{f}}{\partial (x^l)^2}\sqrt{\det{g_{ij}}}\mathrm{d}x\right.\nonumber\\
& \left.+\frac{s}{2}\int_{\psi(U)}\hat{w}^2\eta_R^2\frac{\partial \hat{f}}{\partial x^l}\frac{\partial \sqrt{\det{g_{ij}}}}{\partial x^l}\mathrm{d}x+\frac{s^2}{4}\int_{\psi(U)}\hat{w}^2\eta_R^2\left(\frac{\partial \hat{f}}{\partial x^l} \right)^2\sqrt{\det{g_{ij}}}\mathrm{d}x \right] \nonumber\\
\ge& C\left[\!\frac{(1\!-\!5\epsilon)s^2}{4}\!\!\int_{\psi(U)}\!\!\hat{w}^2\eta_R^2\left(\frac{\partial \hat{f}}{\partial x^l} \right)^2\!\!\!\sqrt{\det{g_{ij}}}\mathrm{d}x\!-\!\frac{C}{\epsilon}(1\!+\!\frac{1}{R^2})\!+\!\frac{s}{2}\!\int_{\psi(U)}\!\!\hat{w}^2\!\eta_R^2 \! \frac{\partial^2 \hat{f}}{\partial (x^l)^2}\!\sqrt{\det{g_{ij}}}\mathrm{d}x\!\right]\nonumber\\
\ge& 
Cs\int_{\psi(U)}\hat{w}^2\eta_R^2 \frac{\partial^2 \hat{f}}{\partial (x^l)^2}\sqrt{\det{g_{ij}}}\mathrm{d}x-C(1+\frac{1}{R^2})\nonumber\\
\ge& Cs\int_{\psi(U)}\hat{w}^2\eta_R^2  \sqrt{\det{g_{ij}}}\mathrm{d}x-\frac{C}{R^2}.
\end{align}
In the second inequality and the forth inequality, we use the estimates \eqref{g12} and integral by parts, respectively. In the fifth inequality, we employ \eqref{NM1} and \eqref{NM2}. Selecting $\epsilon=\frac{1}{6}$ yields the subsequent inequality, and the final step utilize the identity $\frac{\partial^2 \hat{f}}{\partial (x^l)^2}=1$ along with the fact that the positive constant $R$ is sufficiently small.

Similarly as the proof in Lemma \ref{m1}, in view of \eqref{est2} and \eqref{weak},
we conclude that $\mu(M_2)=0$.
\end{proof}
\subsubsection{Proof of Theorem \ref{mmm}}
According to Lemma \ref{up}, Lemma \ref{m1} and Lemma \ref{m2} and the variational characterization \eqref{voe}, we obtain
$$
\lambda_*\!\ge\!
\lim_{m\to+\infty}\int_M\! c(p)w^2(s_m,p)\mathrm{dV_g}\!
=\!\!\int_{M}\! c(p)\mu(\mathrm{dV_g})\!
=\!\!\int_{\mathcal{M}} c(p)\mu(\mathrm{dV_g})
\!\ge\!\min_{\mathcal{M}}c(p)
\!\ge\! \lambda^*\!\ge\!\lambda_*,
$$
where $w(s,\cdot)=e^{\frac{s f}{2}}v(s,\cdot)$ for the normalized eigenfunction $v(s,\cdot)$, and $\lambda^*=\limsup\limits_{s\to\infty}\lambda(s)$, Then we complete the proof.

\section{Appendix}\label{app}
In this section, we give the proof of the variational characterization of the principal eigenvalue $\lambda(s)$ of the problem \eqref{E}. Without loss of generality, we assume that $c(p)\ge c_*>0$.

To proceed, we first place the eigenvalue problem in the setting of a smooth metric measure space  $(M, g, e^{sf}\mathrm{d} V_g)$ (see \cite{zhoudetang}).  It consists of a closed orientable \(n\)-dimensional Riemannian manifold \((M, g)\) together with a weighted volume form \(e^{sf}\mathrm{d} V_g\), where $\mathrm{d}V_g$ is the volume form (Riemannian volume element) on    \((M,g)\).
For convenience, we denote by $\mu$ the
measure induced by the weighted volume element \(e^{sf}\mathrm{d} V_g\), i.e. $\mathrm{d}\mu=e^{sf} \mathrm{d} V_g$.
Let $H^1(M,\mu)$ denote the  space of functions in $L^2(M,\mu)$ whose gradient
 is square-integral with respect to the measure $\mu$. $L^2(M,\mu)$ and $H^1(M,\mu)$ is the Hilbert space with the following  norms:
 \begin{eqnarray*}
 \|u\|_{L^2(M,\mu)}=\left(\int_M u^2\mathrm{d} \mu\right)^{\frac{1}{2}},\quad
 \|u\|_{H^1(M,\mu)}=\left(\int_M (u^2+|\nabla_M u|^2)\mathrm{d} \mu\right)^{\frac{1}{2}}.
 \end{eqnarray*}

 Let $$J(u)=\frac{\int_{M} \left(|\nabla_M u|^2+c |u|^2\right)\mathrm{d}\mu}{\int_{M} |u|^2\mathrm{d}\mu}\ \left(=\frac{\int_{M} \left(|\nabla_M u|^2+c |u|^2\right)e^{sf}\mathrm{d}V_g}{\int_{M} |u|^2e^{sf}\mathrm{d}V_g}\right)$$
 with $\mathrm{d}\mu=e^{sf}\mathrm{d}V_g$. Then, we obtain the variational characterization of $\lambda(s)$ of the problem \eqref{E}.
 \begin{lemma}
 For  $(M, g, \mathrm{d} \mu)$ $(=(M, g, e^{sf}\mathrm{d} V_g))$, the principal eigenvalue $\lambda(s)$ of problem \eqref{E} can be characterized by
 \begin{equation}\label{cha}
 \lambda(s)=\min_{u\in H^1(M,\mu)\atop u\not\equiv 0} J(u).
 \end{equation}
 \end{lemma}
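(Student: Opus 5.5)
The plan is to exhibit the operator in \eqref{E} as a symmetric operator on the weighted space $L^2(M,\mu)$ and then apply the standard Rayleigh--Ritz argument for the bottom of the spectrum of a self-adjoint operator with compact resolvent. Throughout we use that $c\ge c_*>0$.

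\textbf{Step 1: divergence form.} For smooth $u$ we have
\[
\operatorname{div}_g\bigl(e^{sf}\nabla_M u\bigr)=e^{sf}\bigl(\Delta_M u+s\langle\nabla_M f,\nabla_M u\rangle_g\bigr).
\]
Hence the operator $L u=-\Delta_M u-s\langle\nabla_M f,\nabla_M u\rangle_g+cu$ can be written as
\[
L u=-e^{-sf}\operatorname{div}_g\bigl(e^{sf}\nabla_M u\bigr)+cu.
\]
Since $M$ is closed, the divergence theorem gives, for $u,\varphi\in C^\infty(M)$,
\[
\int_M (Lu)\varphi\,\mathrm{d}\mu=\int_M\bigl(\langle\nabla_M u,\nabla_M\varphi\rangle_g+cu\varphi\bigr)\mathrm{d}\mu=:B(u,\varphi).
\]
So $L$ is symmetric in $L^2(M,\mu)$, and $B$ is its quadratic form. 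Because $e^{sf}$ is bounded above and below by positive constants on the compact manifold $M$, the spaces $H^1(M,\mu)$ and $L^2(M,\mu)$ coincide with the unweighted ones and carry equivalent norms.

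\textbf{Step 2: the minimum is attained.} The hypothesis $c\ge c_*>0$ makes $B$ coercive on $H^1(M,\mu)$. Let $\lambda_0=\inf J$ and take a minimizing sequence normalized by $\|u_k\|_{L^2(\mu)}=1$. This sequence is bounded in $H^1$. By Rellich compactness on the closed manifold, a subsequence converges weakly in $H^1$ and strongly in $L^2$. Lower semicontinuity of the Dirichlet part of $B$, together with continuity of the $c$-part under strong $L^2$ convergence, shows that the limit $u_0$ attains $\lambda_0$.

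\textbf{Step 3: the minimizer is the principal eigenfunction.} Computing the first variation of $J$ at $u_0$ gives $B(u_0,\varphi)=\lambda_0\int_M u_0\varphi\,\mathrm{d}\mu$ for all $\varphi$, so $u_0$ is a weak solution of $Lu_0=\lambda_0u_0$. Elliptic regularity then makes $u_0$ smooth. Since $J(|u_0|)=J(u_0)$, we may replace $u_0$ by $|u_0|\ge0$. The strong maximum principle, or Harnack's inequality applied to $-\Delta_M u-s\langle\nabla_M f,\nabla_M u\rangle_g+(c-\lambda_0)u=0$, then gives $u_0>0$.

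It remains to identify $\lambda_0$ with $\lambda(s)$. By Step 1, every eigenvalue of $L$ on $L^2(\mu)$ is real. The positive eigenfunction $v$ of $\lambda(s)$ and $u_0$ satisfy
\[
\lambda(s)\int_M v u_0\,\mathrm{d}\mu=B(v,u_0)=\lambda_0\int_M v u_0\,\mathrm{d}\mu .
\]
Since $\int_M vu_0\,\mathrm{d}\mu>0$, this forces $\lambda(s)=\lambda_0$. This closes the argument and establishes \eqref{cha}, hence also \eqref{voe}.

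The main obstacle is Step 3, specifically this identification with the principal eigenvalue defined in \cite{RGA}. The self-adjointness from Step 1 is essential here: it is what lets the pairing $\int_M v u_0\,\mathrm{d}\mu$ between two positive functions settle the question.
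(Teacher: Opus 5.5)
Your proof is correct and follows the paper's skeleton. Both normalize a minimizing sequence in $L^2(M,\mu)$, extract a weakly convergent subsequence in $H^1(M,\mu)$, pass to the limit by lower semicontinuity, and read off the weak equation from the first variation of $J$, using $\mathrm{div}_g(e^{sf}\nabla_M u)=e^{sf}(\Delta_M u+s\langle\nabla_M f,\nabla_M u\rangle_g)$ and the divergence theorem. You go beyond the paper in two places.

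First, you invoke Rellich compactness to get strong $L^2$ convergence. The paper uses only weak $H^1$ convergence, which by itself does not ensure that the limit $u_0$ still satisfies $\int_M u_0^2\,\mathrm{d}\mu=1$, or even $u_0\not\equiv 0$. Without that normalization, lower semicontinuity of the numerator of $J$ does not give $J(u_0)\le\inf J$.

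Second, the paper stops once $\inf J$ is shown to be \emph{an} eigenvalue and simply declares it the principal one. You actually identify it with the principal eigenvalue of the introduction, through a positive minimizer and the pairing $(\lambda(s)-\lambda_0)\int_M v u_0\,\mathrm{d}\mu=0$. This identification can also be done without positivity. By the symmetry in your Step 1, any eigenpair $(\lambda',\phi)$ satisfies $\lambda'\int_M|\phi|^2\,\mathrm{d}\mu=\int_M(|\nabla_M\phi|^2+c|\phi|^2)\,\mathrm{d}\mu$. So $\lambda'$ is real and at least $\inf J$, and an eigenvalue equal to $\inf J$ automatically has the smallest real part.

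Your route costs elliptic regularity and the strong maximum principle, which tacitly uses that $M$ is connected, as does the paper's notion of a simple principal eigenvalue. In exchange, it shows the minimizer is itself the positive principal eigenfunction.
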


 \begin{proof}
 From the definition of $J(u)$, we note that $J(u)$ is bounded below. So the infimum of the functional $J(u)$ is well defined. Define $$\lambda(s)=\inf\limits_{u\in H^1(M,\mu)\atop u\not\equiv 0} J(u),$$ then we need to prove that $J(u)$ achieves its minimum at the space $H^1(M,\mu)\setminus\{0\}$ and $\lambda(s)$ is the principal eigenvalue of problem \eqref{E}, i.e. there exists a non-trivial function $v\in H^1(M,\mu)\setminus\{0\}$ such that $\inf\limits_{u\in H^1(M,\mu)\atop u\not\equiv 0} J(u)=J(v)$ and $L_M v=\lambda(s)v$ on $M$.

 To this end, we select a minimizing function sequence  $\{u_m\}_{m=1}^{+\infty}$ in $H^1(M,\mu)$ satisfying
 \begin{equation}\label{mod}
 \int_M u_m^2\mathrm{d} \mu\equiv 1,
 \end{equation}
 such that
$$
\lambda(s)=\lim_{m\to+\infty}J(u_m)=\inf_{u\in H^1(M,\mu)\atop u\not\equiv 0} J(u).
$$
Then there exists a constant $C(s)$, independent of $m$, such that
\begin{equation}\label{mos}
C(s)\ge \int_{M} \left(|\nabla_M u_m|^2+c |u_m|^2\right)\mathrm{d}\mu\ge \int_{M} |\nabla_M u_m|^2\mathrm{d}\mu, \, m=1,2,...
\end{equation}
Combining \eqref{mod} with \eqref{mos}, we deduct that the sequence $\{u_m\}_{m=1}^{+\infty}$ is uniformly bounded in $H^1(M,\mu)$. Thus, there exists a subsequence  $\{u_{m_k}\}_{k=1}^{+\infty}$ of $\{u_m\}_{m=1}^{+\infty}$ such that $u_{m_k}$ weakly converge to some $v$ in $H^1(M,\mu)$. Hence, noting that $c\ge c_*>0$ is smooth on $M$, according to the weak lower semi-continuity of the $L^2$ norm, we obtain
$$
\liminf_{k\to+\infty}J(u_{m_k})=\liminf_{k\to+\infty}\int_{M} \left(|\nabla_M u_{m_k}|^2+c |u_{m_k}|^2\right)\mathrm{d}\mu\ge \int_{M} \left(|\nabla_Mv|^2+c |v|^2\right)\mathrm{d}\mu=J(v).
$$
 We derive that
$$\lambda(s)=\inf_{u\in H^1(M,\mu)\atop u\not\equiv 0} J(u)\ge J(v)\ge \lambda(s),$$
which implies that $\lambda(s)=\min\limits_{u\in H^1(M,\mu)\atop u\not\equiv 0} J(u)=J(v)$.

We still need to present that $v$ satisfies the eigenvalue problem \eqref{E} in weak sense. Let $h(t)=J(v+t\psi)$ for $\psi\in H^1(M,\mu)$.
Direct calculations yields that
\begin{align*}
0=&h'(0)\int_M v^2\mathrm{d}\mu \nonumber\\
=&2\int_M (\nabla_M v,\nabla_M\psi)_g+(c-\lambda(s))v\psi\mathrm{d}\mu\nonumber\\
=&2\int_M\left[(\nabla_M v,\nabla_M\psi)_g+(c-\lambda(s))v\psi\right]e^{sf}\mathrm{d}V_g\nonumber\\
=&2\int_M \dive(\psi e^{sf}\nabla_M v)\mathrm{d}V_g-2\int_M[(\Delta_M v)+s(\nabla_M v, \nabla_M f )_g-(c-\lambda(s))v]\psi e^{sf}\mathrm{d}V_g\nonumber\\
=&-2\int_M[(\Delta_M v)+s(\nabla_M v, \nabla_M f )_g-(c-\lambda(s))v]\psi e^{sf}\mathrm{d}V_g,\quad \text{for all } \psi\in H^1(M,\mu).
\end{align*}
The third equality follows from $\mathrm{d}\mu=e^{sf}\mathrm{d}V_g$. The forth and fifth  equalities follows from the divergence theorem (see \cite{jmlee}). We now complete the proof.
 \end{proof}

\bigbreak
\noindent \textbf{Acknowledgment}.
The research of Xin Xu is supported by NSF of China (Youth Program No. 12401255). 


\bigbreak
\noindent\textbf{Declaration of competing interest}
The authors declare that they have no known competing financial interests or personal relationships that could have appeared to influence the work reported in this paper.

\bigbreak
\noindent\textbf{Data availability statement}
Data sharing is not applicable to this article as no new data were created or analysed in this study.

\bibliographystyle{abbrv}
\bibliography{bib}

\end{document}